\documentclass[12pt]{amsart}
\usepackage{amsmath,amsfonts,amssymb,amsthm, esint}
\usepackage[english]{babel}
\usepackage{enumerate}
\usepackage{dsfont}
\usepackage{comment}
\usepackage[colorlinks,citecolor=blue]{hyperref}
\usepackage{epsfig,graphicx} 
\usepackage{latexsym, amsxtra, mathrsfs, bm}
\usepackage{graphics}
\usepackage{verbatim}
\usepackage[abs]{overpic}
\usepackage{mathtools}
\usepackage{amscd}
\oddsidemargin0mm
\evensidemargin0mm
\allowdisplaybreaks[3]
\textwidth160mm

\theoremstyle{plain}
        \newtheorem{theorem}{Theorem}[section]
        \newtheorem*{theorem*}{Theorem}
        \newtheorem*{conj*}{Conjecture}
        \newtheorem{lemma}[theorem]{Lemma}
        \newtheorem{prop}[theorem]{Proposition}
        \newtheorem{corollary}[theorem]{Corollary}

\theoremstyle{definition}
        \newtheorem{definition}[theorem]{Definition}
        \newtheorem{rem}[theorem]{Remark}

\theoremstyle{remark}
        \newtheorem*{remark}{Remark}

\numberwithin{equation}{section}
\numberwithin{theorem}{section}
\numberwithin{table}{section}
\numberwithin{figure}{section}

\begin{document}
\title{Bifurcations and invariant sets for a family of replicator maps from evolutionary games}
\author{Sergey Kryzhevich}
\address[Sergey Kryzhevich]{
	Institute of Applied Mathematics, Faculty of Applied Physics and Mathematics, Gda\'nsk University of Technology, 80-233, Gda\'nsk, Poland}
\email[Sergey Kryzhevich]{sergey.kryzhevich@pg.edu.pl}
\author{Yiwei Zhang}
\address[Yiwei Zhang]{School of Mathematical Sciences and Big Data, Anhui University of Science and Technology, Huainan, Anhui 232001, P.R. China}
\email[Yiwei Zhang]{yiweizhang831129@gmail.com}
\author{Magdalena Chmara}
\address[Magdalena Chmara]{Institute of Applied Mathematics, Faculty of Applied Physics and Mathematics, Gda\'nsk University of Technology, 80-233, Gda\'nsk, Poland	}
\email[Magdalena Chmara]{magdalena.chmara@pg.edu.pl}

\thanks{}

\begin{abstract} We study the dynamics of a family of replicator maps, depending on two parameters. Such studies are motivated by the analysis of the dynamics of evolutionary games under selections.  From the dynamics viewpoint, we prove the existence of hyperbolic chaos for the considered map. Moreover, we also give a partial solution of an open problem formulated in \cite{Misiurewicz1}: to describe all the one-dimensional maps with all the periodic orbits having the same mean value.  
\end{abstract}

\keywords{replicator maps from evolutionary games; chaotic dynamics; bifurcations; periodic orbits}

\maketitle

\section{Introduction}
\subsection{Background}
The natural world is governed by complex processes involving numerous interacting organisms, which can be observed across various mass scales, from unicellular life forms to complex multicellular eukaryotes. These interactions also manifest at different organizational levels, involving multiple species or different inheritable traits within a single species, spanning ecological, epidemiological, and immunological contexts.

A compelling example of this is the formation of ecosystems through community interactions. Research \cite{hastings:88:foodweb,nee:90:commcon} has confirmed observations from field and laboratory studies \cite{weiss:23:gutcomm} on intricate relationships within a complex network of diverse components that adapt their compositions to environmental changes. These interactions may also involve competition for shared resources or environments.

A key theme in the study of these evolutionary systems is the connection between local behavioral interactions of members and the dynamics of changes in population composition over extended periods. These population dynamics may either maintain equilibrium or cycle indefinitely \cite{nowak:04:popdynecol}. Examples include general epidemiological contexts \cite{anderson:91:infechum} where the focus is on hosts and parasite populations similar to predator-prey systems, immunological contexts on the coinfection of multiple parasite strains within a single host \cite{nowak:94:supinf}, and even animal behavior settings \cite{kerr:02:localdis,liao:19:rpseng,sinervo:96:rpsmale}.

Dynamical systems methods \cite{hastings:93:chaosecol,may:74:biopop} are particularly valuable as they provide qualitative insights and sometimes reveal complex population dynamics that experts can understand and validate through controlled laboratory experiments \cite{constantino:97:chaosinsect}. Quantitative analysis tools \cite{rogers:22:chaosnature,toker:20:detectchaos} can be developed and applied to extensive field observations data sets to identify patterns that reflect the mechanisms hypothesized driving these systems.

Central to the study of population biology are the interactions between natural selection and environmental factors and their collective impact on population dynamics. Both biotic and abiotic elements can independently influence these dynamics by altering the selective pressure, as evidenced by ecological field research \cite{saccheri:06:selpopdyn}. Population cycles have been documented in such studies \cite{sinervo:00:cyclesoff,yoshida:03:evoecodyn}, leading to inquiries about the potential for more intricate dynamics under varying selection pressures \cite{benton:09:abiotic}.

Given the scarcity and difficulty of obtaining robust empirical data from field studies \cite{beninca:08:chaosplankton,beninca:15:spefluc}, evolutionary game dynamics \cite{hofbauer:03:egd}—a synthesis of biological theory with game and dynamical theories—can help bridge these knowledge gaps. Typically, this involves analyzing specific dynamical systems that accurately represent the key mechanisms driving the dynamics of interest \cite{beninca:15:spefluc,chen:17:evonplayerhd,doebeli:14:chaosevo,ebenman:96:evopopdyn}. Quantitative measures, such as positive Lyapunov exponents derived from numerical methods and informed by a mathematical understanding of these systems, can confirm the presence of complex (and possibly chaotic) dynamics, even when accounting for stochastic elements that are part of the deterministic framework \cite{constantino:97:chaosinsect,hastings:93:chaosecol}. 

In this paper, we follow the approach and arguments initially presented by May \cite{may:74:biopop,may:76:modelcomp}. We examine a broad class of one-dimensional dynamical systems that maintain frequency-dependent reproduction for two heritable traits and include a parameter that modulates selection intensity. Despite the increased complexity of the mathematical model (for example, the difference function is now transcendental rather than quadratic), we will show that it has similar dynamical properties and can be analyzed using many analytical tools \cite{demelo:93:1ddynamics,vanstrien:10:1ddynamicsnew} that were previously only applied to single-population dynamical systems.

As a preliminary demonstration, we identify inherent properties within this family that explain the emergence of complex population dynamics as selection pressure increases. Furthermore, it provides a framework where the qualitative effects of natural selection on populations can be directly connected to well-understood one-dimensional dynamical systems. There is potential for profound mathematical insights to have corresponding natural interpretations for phenomena observed in biological populations.

\subsection{A dynamical model}
In this subsection, we mathematically describe our dynamical model and how it is derived from two-player matrix games.

\subsubsection{Evolutionary Games}\label{subsection:evolutionarygames}
In the context of Evolutionary Games, a population that engages in interactions modeled as a two-player symmetric noncooperative game with a finite number of pure strategies will reach a unique state. This state corresponds to one of the solutions of the game. The population state, denoted as $\mathbf{p}$, is characterized by fixed proportions of individuals using specific strategies, akin to the selection probabilities when the entire population acts as a mixed strategy player in the best response to itself, resulting in a Nash equilibrium $(\mathbf{p},\mathbf{p})$. In this equilibrium, all strategies have equal total payoffs, so no individual would benefit from switching to a different strategy. 

For games with two strategies ${1,2}$, the solution can be relatively easy to derive. The $2 \times 2$ payoff matrix $\mathbf{A} = (a_{ij} : i,j = 1,2)$ defines the payoff $a_{ij}$ for the outcome of the game between the pair $(i,j)$, where $i,j \in {1,2}$. The state vector $\mathbf{p} = (p_1,p_2)^T$ represents the proportions $p_1,p_2 \in [0,1]$ of the two strategies, and the vector $\mathbf{w} = (w_1,w_2)^T = \mathbf{A} \mathbf{p}$ accumulates their total payoffs. By setting $p_1 = p$ and $p_2 = 1 - p$ and solving the linear equations
\begin{align}
w_1 &= pa_{11} + (1 - p)a_{12} \\
w_2 &= pa_{21} + (1 - p)a_{22}
\end{align}
with the condition $w_1 = w_2$, we obtain the Nash equilibrium solution $p_{\mathrm{NE}} = (a_{22} - a_{12})/(a_{11} - a_{21} + a_{22} - a_{12})$. For the Hawks and Doves evolutionary game with the payoff matrix
\begin{align}\label{HawksandDoves}
\mathbf{A}:=
\left[ \begin{array}{cc}
(V - C)/2 & V \\
0 & V/2
\end{array} \right]
\end{align}
where $V$ and $C$ are positive, and $V < C$, the population equilibrium is at $p_{\mathrm{NE}} = V/C$.

\subsubsection{A family of replicator maps}
When the replicator incorporates the effects of external factors that collectively mediate the strength of selection $e^{{\gamma}w_i}$. Two-player matrix games mentioned in Subsection \ref{subsection:evolutionarygames} naturally introduce a \emph{replicator map} $f:[0,1]\to [0,1]$ by
\begin{align}\label{genpropreplicator1}
f(p) &= \frac{p \cdot e^{{\gamma}(pa_{11} - pa_{12} + a_{12})}}{p \cdot e^{{\gamma}(pa_{11} - pa_{12} + a_{12})} + (1 - p) \cdot e^{{\gamma}(pa_{21} - pa_{22} + a_{22})}}.
\end{align}
Here, $p \in [0,1]$ represents the proportion of one of the two strategies adopted in the population since $p_{2}=1-p_{1}$. Each entry $a_{ij}$ in the game matrix $\mathbf{A}$ accounts for the gains in fitness an individual of trait-$i$ has against one of trait-$j$ from a single interaction. 

The inverse temperature parameter $\gamma := 1/T$ (see, e.g. \cite{kirkpatrick:83:simann}) has a biological interpretation to modulate the frequency-reproduction mechanism in a population with many-body interactions. Low-valued $\gamma$ (resp. high valued $T$) settings are associated with reproductions predominantly governed by fitness gains due to trait differences $\mathbf{A}$ amidst some negligible effects from external factors. This could change and is reflected by higher-valued $\gamma$ (lower-valued $T$) settings that act to reduce those trait differences, and the overall effect on reproduction would appear a matter of individuals' circumstances.

For Hawk-Dove interactions, algebraic simplification of \eqref{HawksandDoves} and \eqref{genpropreplicator1} results in a \emph{three-parameter $(\gamma,V,C)$ family of replicator maps,} \begin{equation}\label{equ:Boltzmann}
f_{C,V,\gamma}(x)=\frac{x}{x+(1-x) e^{\frac{C\gamma}2 \left(x-\frac{V}{C}\right)}}
\end{equation}
with parameters $C, V, \gamma>0$ and $C>V$.

The rest of the paper is organized as follows. In Section 2, we discuss the properties of the considered family of maps (mainly obtained by others). In Section 3, we list what is known about the periodic solutions and bifurcations. In Sections 4 and 5 we formulate and prove a new result on the existence of a hyperbolic chaotic repeller of the considered system. Finally, in Section 4 we discuss an interesting fact about replicator mappings: the mean values of all periodic solutions (except edges 0 and 1) coincide. We introduce a broad class of mappings posessing the same property and thus partially solve an open problem, formulated in \cite{Misiurewicz1}.

\section{Basic properties}

In this section, we will discuss some basic properties of the family of replicator maps \eqref{equ:Boltzmann}. These properties will be useful in the sequel.

\subsection{Reparametrization} 
It is easy to see that, in fact, the replicator map depends on only two parameters: $a:=c\gamma/2$ and $b:=v/c$. So, we rewrite \eqref{equ:Boltzmann} in the following form\footnote{In \cite{Misiurewicz4} and \cite{Misiurewicz1}, $f$ is named by $f_{MW}$. Here MW refers to the study of the evolution in a simple population game where agents are using multiplication weights algorithm (cf. \cite{AHK12}).}: 
\begin{equation}\label{equ:Boltzmann0}
f(x)=f_{a,b}(x)=\frac{x}{x+(1-x) e^{a\left(x-b \right)}}
\end{equation}
with parameters $a>0$ and $b\in (0,1)$. See Figure \ref{fig_1} for the sample graph of $f_{a,b}$. 

\begin{figure}[t!]\centering
  \includegraphics[width=.84\textwidth]{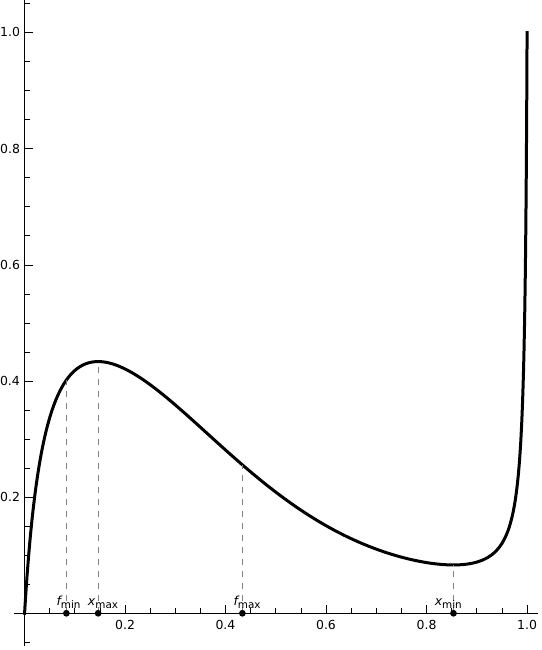}
\caption{\footnotesize The map $f_{8,1/3}$.}
\label{fig_1}
\end{figure}

\subsection{Fixed points}

Observe that the map $f$ has exactly three fixed points $0$, $b$, and $1$. To study their stability, let us calculate 
\begin{equation}\label{eq0}
f'_{a,b}(x)=\dfrac{e^{a\left(x-b\right)}\left(a x^2-a x+1\right)
}{\left(x+(1-x)e^{a\left(x-b\right)}\right)^2}.
\end{equation}
Consequently, 
$$f'_{a,b}(0)=e^{a b}>1, \qquad f'_{a,b}(1)=e^{a (1-b)}>1.$$
Evidently, both points $0$ and $1$ are unstable. 

Meanwhile
$$f'_{a,b}\left(b\right)=1-a b(1-b)<1$$
Consequently, the point $b$ is stable if $f'(b)>-1$, that is
$$a<\dfrac{2}{b(1-b)},$$ 
and unstable if
$$a>\dfrac{2}{b(1-b)}.$$ 

In addition, from Equation \eqref{eq0} it follows that the map $f_{a,b}$ is monotone if $a\le 4$ (so its dynamics is trivial). In what follows, we always assume that \begin{equation}\label{eq1}
a>4.
\end{equation}

Moreover, in \cite[Theorem 3.8]{Misiurewicz1} it was shown that the fixed point $b$ attracts all the points of the interval $(0,1)$ provided in $a\le 2/b(1-b)$. 

\subsection{Recursive formula}

It follows from \cite[Equation (13)]{Misiurewicz1} that the $n$-th iteration of our map $f_{a,b}$ is given by the formula:
$$f_{a,b}^n(x)=\frac{x}{x+(1-x) e^{a\left(\sum_{i=0}^{n-1}\left(f_{a,b}^i(x)-b\right)\right)}},\quad \forall x \in[0,1], \quad \forall n \in {\mathbb N}.$$

\subsection{Critical points}

Let us observe, first of all, that  if \eqref{eq1} is true, the mapping 
$f_{a,b}$ has exactly two critical points:
$$
x_{max} = \dfrac12-\sqrt{\dfrac14-\dfrac1{a}}
\quad\mbox{and}\quad 
x_{min} = \dfrac12+\sqrt{\dfrac14-\dfrac1{a}}.
$$
Evidently, $x_{min}+x_{max}=1$ and $x_{min} x_{max}=1/a$. This gives us the following asymptotic estimates for those values: 
$$
x_{max}=\dfrac1{a}+o\left(\dfrac1{a}\right), \qquad 
x_{min}=1-\dfrac1{a}+o\left(\dfrac1{a}\right).
$$
Besides, we can estimate
$$
f_{min}:=f_{a,b}(x_{min})=\dfrac{1}{1+(1/x_{min}-1)e^{a(x_{min}-b)}}=e^{a(b-1)+o(a)}
$$
as $a\to +\infty$.
Similarly, 
$$
f_{max}:=f_{a,b}(x_{max})=\dfrac{1}{1+(1/x_{max}-1)e^{a(x_{max}-b)}}=1-e^{-ab+o(a)}
$$
as $a\to +\infty$.

\subsection{Symmetry} 

\begin{lemma}\label{lemma1} \cite[Equation (10)]{Misiurewicz1}.
For any $x,b\in [0,1]$ and $a>0$, we have 
$$f_{a,1-b}(x)=1-f_{a,b}(1-x).$$
\end{lemma}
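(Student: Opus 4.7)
The plan is to verify the identity by a direct algebraic computation starting from the defining formula \eqref{equ:Boltzmann0}. Since the claim is a symmetry of a rational-exponential expression, no clever trick should be needed; the substitution $x \mapsto 1-x$ together with the involution $b \mapsto 1-b$ must play off in the denominator.

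First I would write out
$$f_{a,b}(1-x) = \frac{1-x}{(1-x) + x\, e^{a(1-x-b)}}.$$
Then I would compute
$$1 - f_{a,b}(1-x) = \frac{x\, e^{a(1-x-b)}}{(1-x) + x\, e^{a(1-x-b)}},$$
which is the key step: collecting into a single fraction puts the exponential factor onto $x$ in the numerator, exactly where it is needed to reproduce the form of $f$ evaluated at $x$.

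Next I would multiply numerator and denominator by $e^{-a(1-x-b)} = e^{a(x-(1-b))}$. This kills the exponential in the numerator and flips the sign of the exponent in the denominator. Writing $1-x-b = -(x-(1-b))$ in the resulting expression, I would obtain
$$1 - f_{a,b}(1-x) = \frac{x}{x + (1-x)\, e^{a(x-(1-b))}} = f_{a,1-b}(x),$$
by the very definition \eqref{equ:Boltzmann0} with $b$ replaced by $1-b$.

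The only delicate point is not to confuse the signs in the rewriting $1-x-b = -(x-(1-b))$, because this is what converts the parameter $b$ into $1-b$ in the final formula. Otherwise the proof is a one-paragraph calculation, and the identity, although purely algebraic, is geometrically the reflection $(x,y) \mapsto (1-x,1-y)$ of the graph of $f_{a,b}$ onto that of $f_{a,1-b}$.
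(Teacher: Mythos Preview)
Your computation is correct; the substitution, the subtraction from $1$, and the multiplication by $e^{-a(1-x-b)}$ all go through exactly as you describe, and the sign identity $1-x-b=-(x-(1-b))$ is handled properly. The paper itself gives no proof of this lemma, merely citing it as \cite[Equation (10)]{Misiurewicz1}, so your direct verification is precisely what is needed and is the natural (indeed essentially the only) argument.
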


This means that the transformation $x\mapsto 1-x$ converts the map $f_{a,b}$ to $f_{a,1-b}$. 
In particular,
$$f^n_{a,1-b}(x)=1-f^n_{a,b}(1-x)$$
for any $n\in {\mathbb N}$. So, it suffices to study the case $b\le 1/2$ only. 

\subsection{Schwartzian derivative}

Recall the notion of the Schwartzian derivative:
$$Sf_{a,b}(x)=\left(\dfrac{f''_a(x)}{f'_{a,b}(x)}\right)'-\dfrac12
\left(\dfrac{f''_a(x)}{f'_{a,b}(x)}\right)^2.$$

The following technical statement is proven in \cite[Proposition 3.2]{Misiurewicz1}.

\begin{lemma}\label{Lemma5} For any $a>4$, any $b\in (0,1)$ and any $x\in (0,1)\setminus \{x_{min},x_{max}\}$ we have $Sf_{a,b}(x)<0$ while 
$$Sf_{a,b}(x_{min})=Sf_{a,b}(x_{max})=-\infty.$$
\end{lemma}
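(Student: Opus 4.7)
The plan is to exploit the composition structure of $f_{a,b}$ together with the cocycle identity $S(u\circ v)=(Su\circ v)\,(v')^{2}+Sv$ for the Schwartzian derivative. Write
$$f_{a,b}(x)=\frac{1}{1+e^{g(x)}},\qquad g(x):=\ln\frac{1-x}{x}+a(x-b),$$
so that $f_{a,b}=\phi\circ E\circ g$, where $\phi(y)=1/(1+y)$ is M\"obius (hence $S\phi\equiv 0$) and $E(y)=e^{y}$ satisfies $SE\equiv -\tfrac12$. Two applications of the cocycle rule immediately give the key reduction
$$Sf_{a,b}(x)=-\tfrac12\bigl(g'(x)\bigr)^{2}+Sg(x).$$

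Next I would decompose $g=\psi+L$ with $\psi(x):=\ln\bigl((1-x)/x\bigr)$ and $L(x):=a(x-b)$. Since $\psi=\ln\circ\mu$ for the M\"obius map $\mu(x)=(1-x)/x$ and $S\ln(y)=1/(2y^{2})$, a third application of the cocycle rule produces the clean identity
$$S\psi(x)=\tfrac12(\psi'(x))^{2},\qquad \text{equivalently}\qquad \psi'''\psi'-\tfrac32(\psi'')^{2}=\tfrac12(\psi')^{4}.$$
Because $L$ is linear, $g''=\psi''$, $g'''=\psi'''$ and $g'=\psi'+a$. Substituting these and using $\psi'=-u$, $\psi'''=-2u^{2}(u-3)$ in the variable $u:=1/(x(1-x))$, a direct expansion collapses to the algebraic identity
$$2\bigl(g'(x)\bigr)^{2}\cdot Sf_{a,b}(x)\;=\;-a\,Q(u),\qquad Q(u):=6(a-2)u^{2}-4a^{2}u+a^{3}.$$
Note that $b$ has disappeared, as it only shifts $g$ by a constant. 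Since $a>0$ and $(g')^{2}>0$ off the critical points, the desired inequality $Sf_{a,b}<0$ on $(0,1)\setminus\{x_{\min},x_{\max}\}$ reduces to the polynomial positivity $Q(u)>0$ on the range $u\in[4,\infty)$, which is exactly the image of $x\mapsto 1/(x(1-x))$ over $(0,1)$.

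For this last step, $Q$ is an upward-opening quadratic (as $a>4>2$) with vertex $u_{*}=a^{2}/(3(a-2))$, discriminant $8a^{3}(6-a)$, and minimum value $a^{3}(a-6)/(3(a-2))$. If $a\ge 6$ the discriminant is nonpositive and $Q\ge 0$ everywhere; the only zero, occurring at $(a,u)=(6,3)$, lies outside $[4,\infty)$. If $4<a<6$, a short calculation gives $u_{*}<4$, so $Q$ is strictly increasing on $[4,\infty)$ and attains its minimum there at $u=4$; the value $Q(4)=a^{3}-16a^{2}+96a-192$ vanishes at $a=4$ and has everywhere-positive derivative $3a^{2}-32a+96$ (its discriminant is $-128$), hence $Q(4)>0$ for every $a>4$. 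In either case $Sf_{a,b}<0$ off the critical set.

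Finally, at $x_{0}\in\{x_{\min},x_{\max}\}$ the quadratic $ax^{2}-ax+1$ has a simple zero (as $a>4$), so $f'$ has a simple zero and $f''(x_{0})\ne 0$; consequently the term $-\tfrac32(f''/f')^{2}$ dominates in $Sf_{a,b}$ and drives it to $-\infty$. I expect the main obstacle to be the polynomial step: in the regime $4<a<6$ the quadratic $Q$ genuinely changes sign on $\mathbb{R}$, and one must verify carefully that both of its real roots lie strictly to the left of the lower bound $u=4$, a fact that becomes delicate as $a\to 4^{+}$ where $Q(4)\to 0$.
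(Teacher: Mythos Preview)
Your argument is correct. The paper does not actually prove this lemma: it simply quotes \cite[Proposition~3.2]{Misiurewicz1} and moves on, so there is no ``paper's proof'' to compare against. Your route via the factorisation $f_{a,b}=\phi\circ E\circ g$ with $g(x)=\ln\tfrac{1-x}{x}+a(x-b)$ and two applications of the Schwartzian cocycle is clean; the reduction to $2(g')^{2}\,Sf_{a,b}=-a\,Q(u)$ with $u=1/(x(1-x))\ge 4$ and $Q(u)=6(a-2)u^{2}-4a^{2}u+a^{3}$ checks out line by line, and the final treatment of the critical points is standard.

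One minor simplification you may want to incorporate: the ``delicate'' endpoint you worry about is in fact not delicate at all, because $Q(4)=a^{3}-16a^{2}+96a-192$ factors as $(a-4)(a^{2}-12a+48)$, and the quadratic factor has discriminant $-48<0$. Hence $Q(4)>0$ is immediate for every $a>4$, and you can dispense with the separate vertex/monotonicity discussion in the range $4<a<6$.
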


Recall that for $a\le 4$, the map is monotone and its dynamics are trivial.

\begin{corollary}
For any $a>4$ and any $b\in (0,1)$, the mapping $f_{a,b}$ has at most 2 minimal attracting sets.
\end{corollary}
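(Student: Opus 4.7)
The plan is to apply the Singer-type theorem that bounds the number of minimal attracting sets of a one-dimensional map by the number of its critical points and endpoints. The essential hypothesis $Sf_{a,b}<0$ on $(0,1)$ is supplied by Lemma \ref{Lemma5}; since $f_{a,b}$ is real-analytic on $[0,1]$ with two simple (hence non-flat) critical points $x_{\min}$ and $x_{\max}$, the appropriate refinement of Singer's theorem (see, e.g., the monograph \cite{demelo:93:1ddynamics}) tells us that the basin of each minimal attracting set of $f_{a,b}$ must contain either the forward orbit of a critical point or the forward orbit of an endpoint of $[0,1]$.

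Next I would enumerate the candidate orbits. By Subsection 2.4 the critical points are exactly $x_{\min}$ and $x_{\max}$, both lying in $(0,1)$. The endpoints $0$ and $1$ are fixed, and by Subsection 2.2 their multipliers are $f'_{a,b}(0)=e^{ab}>1$ and $f'_{a,b}(1)=e^{a(1-b)}>1$, so both endpoints are repelling fixed points. Their forward orbits are the singletons $\{0\}$ and $\{1\}$, and these cannot lie in the basin of any attracting set distinct from themselves; since $\{0\}$ and $\{1\}$ are themselves repelling, they are not attracting sets at all. Consequently, the only orbits available to populate basins of minimal attracting sets are those of $x_{\min}$ and $x_{\max}$, and distinct minimal attracting sets have disjoint basins, so there can be at most two of them.

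The main obstacle is not the counting argument but the choice of the correct black box: Singer's original theorem concerns attracting periodic orbits, whereas the corollary is stated for arbitrary minimal attracting sets (which could in principle be solenoidal or interval-like). The required extension depends on the absence of wandering intervals, a classical consequence of $Sf<0$ together with non-flatness of the critical points, so no new estimates are needed—only a careful reference to the appropriate theorem in the one-dimensional dynamics literature.
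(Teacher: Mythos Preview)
Your argument is correct and follows essentially the same route as the paper: invoke the negative Schwarzian (Lemma~\ref{Lemma5}) and a Singer-type theorem to conclude that the basin of every minimal attracting set must contain the orbit of a critical point, whence the two critical points $x_{\min},x_{\max}$ bound the number of such sets by two. The paper's proof is terser---it simply cites \cite[Theorem~11.4]{Devaney} for the ``each attractor captures a critical point'' step---while you add the (helpful) verification that the repelling fixed endpoints $0,1$ contribute nothing and flag the periodic-versus-general-attractor subtlety, but the underlying strategy is identical.
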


\begin{proof} This corollary follows from \cite[Theorem 11.4]{Devaney}. Indeed, in the considered case (a mapping on a compact interval), any of those sets attracts at least one critical point.
\end{proof}

\section{Periodic solutions and bifurcations}

\subsection{The case b=1/2} 

It follows from the results of \cite[Theorem 3.10]{Misiurewicz1} that the dynamics is trivial in the case considered. In other words, two subcases are possible.
\begin{enumerate}
\item $a\le 8$, the fixed point is stable and attracts all the points of $(0,1)$.
\item $a>8$, the fixed point is unstable. There exists a period-2 orbit that attracts all the points of $(0,1)$ except the pedigree of the point $1/2$ (the point itself and all its iterative preimages).
\end{enumerate}

If $\{x_1,x_2\}$ is the period-2 orbit for some $a$ and $b=1/2$, then $x_1+x_2=1$ and, due to the symmetry of the map, we have $f'(x_1)=f'(x_2)$, and, consequently, $(f^2)'(x_1)\ge 0$. 

The period-2 point is unique and satisfies the equation 
$$\dfrac{x_1}{1-x_1}=e^{a(2x_1-1)/4}$$ 
(this follows from equation $f(x_1)=1-x_1$). Consequently
$x_1=e^{-a/4+o(a)}$ as $a\to +\infty$.

\subsection{Period-2 solution}

\begin{lemma}\label{Lemma4} For any values of parameters $a>0$ and $b\in (0,1)$ such that $a>2/b(1-b)$ the considered map admits the unique period-2 solution. 
\end{lemma}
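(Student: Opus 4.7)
\bigskip\noindent\textbf{Proof plan.} My plan is to reduce the period-2 equation to a single scalar equation in one real variable and then prove that the resulting equation has a unique positive solution by monotonicity. The key idea is to exploit the logit-like change of variable $u(x)=\log\!\bigl(x/(1-x)\bigr)$. Starting from the defining formula \eqref{equ:Boltzmann0}, a direct computation yields the clean conjugacy relation
\begin{equation*}
u(f_{a,b}(x)) \;=\; u(x) \;-\; a(x-b),
\end{equation*}
valid for all $x\in(0,1)$. This replaces the transcendental iteration by an affine shift in the $u$-coordinate, driven by the position of $x$ relative to $b$.

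Applying this relation to an alleged period-2 orbit $\{x_1,x_2\}\subset(0,1)$ with $f_{a,b}(x_1)=x_2$, $f_{a,b}(x_2)=x_1$, I would get the pair of equations
\begin{align*}
u(x_2) &= u(x_1) - a(x_1-b),\\
u(x_1) &= u(x_2) - a(x_2-b).
\end{align*}
Adding them forces $x_1+x_2=2b$, so any period-2 orbit is automatically symmetric about $b$; subtracting them gives $u(x_2)-u(x_1)=\tfrac{a}{2}(x_2-x_1)$. Writing $x_1=b-s$, $x_2=b+s$ with $s\in\bigl(0,\min(b,1-b)\bigr)$, the existence of a period-2 orbit is thus equivalent to solving
\begin{equation*}
\phi(s) \;:=\; \frac{u(b+s)-u(b-s)}{s} \;=\; a.
\end{equation*}

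The heart of the proof is to verify that $\phi$ is strictly increasing on $\bigl(0,\min(b,1-b)\bigr)$, with $\phi(0^+)=2u'(b)=\frac{2}{b(1-b)}$ (by Taylor expansion) and $\phi(s)\to+\infty$ at the right endpoint (since $u$ blows up at $0$ or $1$). To establish monotonicity I would differentiate, obtaining
\begin{equation*}
s^2\phi'(s) \;=\; s\bigl(u'(b+s)+u'(b-s)\bigr) - \bigl(u(b+s)-u(b-s)\bigr),
\end{equation*}
and observe that this vanishes at $s=0$ and has derivative $s\bigl(u''(b+s)-u''(b-s)\bigr)$, which is strictly positive since $u'''(x)=2/x^3+2/(1-x)^3>0$ makes $u''$ strictly increasing. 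Hence $\phi'(s)>0$ for all admissible $s$. Combining strict monotonicity with the limits at the endpoints, the equation $\phi(s)=a$ has exactly one solution in $\bigl(0,\min(b,1-b)\bigr)$ precisely when $a>2/(b(1-b))$, and the correspondence between such $s$ and period-2 orbits is a bijection. I expect the step that requires the most care is verifying the sign $u'''>0$ and deducing monotonicity cleanly, but this is a one-line computation in the logit coordinate and is the only genuinely non-trivial ingredient.
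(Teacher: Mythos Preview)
Your argument is correct. The conjugacy $u(f_{a,b}(x))=u(x)-a(x-b)$ is exactly the negative of the paper's later change of variable $h(x)=\ln\frac{1-x}{x}$ (Section~\ref{subsec:conjugacy}), and your derivation of $x_1+x_2=2b$ together with the reduction to $\phi(s)=a$ is clean; the monotonicity step via $u'''>0$ is also valid, and the converse direction (that a solution $s$ indeed yields a genuine period-2 orbit) follows since $u$ is a bijection from $(0,1)$ to $\mathbb{R}$.

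The comparison with the paper is simple: the paper does not prove Lemma~\ref{Lemma4} at all but defers to \cite[Lemmas~1 and~3]{aks}, which treat the conjugate map $g_{a,b}$. Your proof is therefore genuinely different in that it is self-contained. What you gain is an elementary one-variable argument that never leaves the interval $(0,1)$ and makes the role of the threshold $2/(b(1-b))$ transparent as the limit $\phi(0^+)=2u'(b)$; what the paper's citation buys is brevity and a link to prior work on $g_{a,b}$. It is worth noting that the symmetry $x_1+x_2=2b$ you derive is precisely the period-2 instance of the ``constant mean'' property the paper discusses in Section~6 and in \cite[Corollary~3.5]{Misiurewicz1}, so your computation also gives an independent direct check of that phenomenon for period~2.
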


This statement follows immediately \cite[Lemmas 1 and 3]{aks}.

Observe that for $b<1/2$ and for sufficiently large values of $a$, the period-2 orbit $\{\xi_1,\xi_2\}$ is such that $\xi_1 < x_{max}$ (this follows from the fact that $x_{max}+f_{max}>1$). Consequently, $\xi_2 \approx 2b$ and the derivative $(f^2)'(\xi_1)=(f^2)'(\xi_2)<0$. The case $b>1/2$ is symmetric.

\subsection{Bifurcations} 
The considered map always admits a fixed point. It witnesses the period-doubling bifurcation at 
$$a=a_0:=\dfrac{2}{b-b^2}.$$

Moreover, the following statement is true (see \cite{Misiurewicz1}).

\bigskip

\begin{prop}\label{Proposition2} \hfill
\begin{enumerate}
\item For any $a\in (0,a_0]$, the fixed point $b$ is asymptotically stable and attracts all the points of $(0,1)$. 
\item For any $a>a_0$, the point $b$ is repelling. There is a period-2 orbit $\{p_a,q_a\}$ with 
$$0<p_a<b<q_a<1.$$ 
This period-2 orbit is stable in the right neighborhood of $a_0$.
\end{enumerate}
\end{prop}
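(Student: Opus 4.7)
The plan is to assemble Proposition \ref{Proposition2} from four ingredients already gathered in the previous subsections: the linearization $f'_{a,b}(b) = 1 - ab(1-b)$ read off from \eqref{eq0}; the negative Schwartzian derivative of Lemma \ref{Lemma5}; the full-basin statement Theorem~3.8 of \cite{Misiurewicz1} quoted after \eqref{eq1}; and the existence/uniqueness statement Lemma \ref{Lemma4} for the period-2 orbit.

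For part (1), I would split into $a < a_0$ and $a = a_0$. When $a < a_0$, the inequality $|f'_{a,b}(b)| < 1$ gives local asymptotic stability of $b$, and Theorem~3.8 of \cite{Misiurewicz1} immediately upgrades this to attraction of the full interval $(0,1)$. The delicate case is $a = a_0$, where $f'_{a,b}(b) = -1$ and linearization alone is inconclusive. Here I would invoke the classical fact that $f'(b) = -1$ combined with $Sf(b) < 0$ forces local attraction: a short computation at a period-doubling point gives $(f^2)'(b) = 1$, $(f^2)''(b) = 0$, and $(f^2)'''(b) = 2\, Sf_{a_0,b}(b) < 0$, so
\[
f^2(x) - x = \tfrac{1}{3}\,Sf_{a_0,b}(b)\,(x-b)^3 + O\bigl((x-b)^4\bigr)
\]
near $b$. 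The resulting sign pattern yields $b < f^2(x) < x$ for $x$ slightly above $b$ and $x < f^2(x) < b$ for $x$ slightly below, so $b$ attracts a neighborhood under $f^2$, hence under $f$. Theorem~3.8 of \cite{Misiurewicz1} again promotes this local statement to attraction of the whole $(0,1)$, since $a_0$ is included in its hypothesis.

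For part (2), the repelling nature of $b$ when $a > a_0$ is immediate from $|f'_{a,b}(b)| > 1$. Lemma \ref{Lemma4} provides a unique period-2 orbit $\{p_a, q_a\}$ with $p_a \ne q_a$ and neither point coinciding with $0, b, 1$. To establish $p_a < b < q_a$, I would examine $F := f^2_{a,b}$: since $F'(0) = e^{2ab} > 1$, $F'(1) = e^{2a(1-b)} > 1$, and $F'(b) = f'_{a,b}(b)^2 > 1$, the sign of $F(x) - x$ is positive as $x \to 0^+$, negative just to the left of $b$, positive just to the right of $b$, and negative as $x \to 1^-$. The intermediate value theorem forces at least one zero of $F - \mathrm{id}$ in each of $(0,b)$ and $(b,1)$, and Lemma \ref{Lemma4} makes this exactly one zero in each component, yielding the ordering.

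For stability of $\{p_a, q_a\}$ in the right neighborhood of $a_0$, I would invoke the standard supercritical period-doubling bifurcation theorem. The relevant normal-form coefficient has the sign of $Sf_{a_0,b}(b)$, which is negative by Lemma \ref{Lemma5}. Hence the bifurcation is supercritical and the multiplier $(f_{a,b}^2)'(p_a) = f'_{a,b}(p_a)\,f'_{a,b}(q_a)$ lies strictly inside $(-1,1)$ for $a$ slightly above $a_0$, so $\{p_a, q_a\}$ is asymptotically stable. The main obstacle is the critical parameter $a = a_0$: both the boundary case in part~(1) and the stability assertion in part~(2) reduce to extracting the correct local behavior of $f^2$ from the inequality $Sf_{a_0,b}(b) < 0$. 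Verifying the identity $(f^2)'''(b) = 2\, Sf(b)$ at a period-doubling point and unpacking its consequences for the supercritical normal form is the technical heart of the argument.
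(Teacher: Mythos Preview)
Your argument is correct, but note that the paper itself does not prove Proposition~\ref{Proposition2}: it simply attributes the statement to \cite{Misiurewicz1} (``the following statement is true (see \cite{Misiurewicz1})''). So there is no ``paper's own proof'' to compare against beyond that citation.

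What you have done is supply an actual proof from the ingredients already on the table, and the details check out. In particular, the identity $(f^2)'''(b)=2\,Sf(b)$ at a point with $f'(b)=-1$ follows from the chain-rule computation
\[
(f^2)'''(b)=-2f'''(b)-3\bigl(f''(b)\bigr)^2 = 2\left(\frac{f'''(b)}{f'(b)}-\frac{3}{2}\Bigl(\frac{f''(b)}{f'(b)}\Bigr)^{2}\right)=2\,Sf(b),
\]
so your cubic expansion of $f^2(x)-x$ and the ensuing monotone-staircase argument for local attraction at $a=a_0$ are valid. The intermediate-value location of $p_a,q_a$ on either side of $b$ and the supercriticality of the flip bifurcation from $Sf_{a_0,b}(b)<0$ are likewise standard and correctly invoked. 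One small redundancy: since the quoted \cite[Theorem~3.8]{Misiurewicz1} already covers the full range $a\le a_0$, your separate local analysis at $a=a_0$ is not needed for part~(1); it is, however, exactly the computation that drives the stability assertion in part~(2), so nothing is wasted.
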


\begin{remark}\label{rbist} The numerical results performed in \cite[Section 4]{aks}, indicate that the period-2 orbit cannot coexist with other stable periodic orbits. However, the
so-called bistability, i.e. co-existence of two attracting orbits is rarely observed, is still possible for the considered map. For example, (as was shown numerically in \cite[Section 4]{aks}), two stable orbits of period 4 occur for $a=19.06$, $b=0.3961$; stable orbits of periods 20 and 56 coexist if $a=28.8695$, $b=0.414652$, and two chaotic narrow-band attractors can be seen for $a=28.8708$, $b=0.4141637$.   

Generally speaking, according to the mentioned numerical experiments, for any pair $(m,n)\in {\mathbb N}^2$, $m,n>2$, $(m,n)\neq (3,3)$, two distinct stable periodic solutions of periods $m$ and $n$ may coexist. 

Those numerical studies demonstrate the existence of period-doubling cascades with smooth bifurcation curves. If two curves, corresponding to distinct cascades, intersect transversally (this is observed numerically), any neighborhood of the intersection point contains two stable periodic points of distinct periods. It looks like there are infinitely many intersections of this type.

In a nutshell, bistability is a rare but possible phenomenon for the considered system. Here we notice that in the preprint \cite{Misiurewicz4} some regions of the parameters were described where the considered map may only have one attracting periodic solution.

\end{remark}

\section{Chaotic dynamics, hyperbolic chaos}

The main claim of this section states the existence of a kind of symbolic dynamics for $f_{a,b}$ (and guarantees the existence of periodic points of any periods) in the case $b\neq 1/2$. Observe that for $b=1/2$, we see either a stable fixed point $b$ that attracts all orbits except $0$, $1$, or an attracting orbit of period 2.

Let $\Sigma$ be the set of all one-sided infinite sequences of symbols '0' and '1' endowed with the standard metrics:
$$\Sigma=\{\sigma=\left(\sigma_j\in \{0,1\}:j\in {\mathbb N}\cup\{0\} \right)\};$$
$$d_\Sigma(\sigma,\theta)=\sum_{j=0}^\infty \dfrac{|\sigma_j-\theta_j|}{2^j}.$$
Consider a subset $\Sigma_0\subset \Sigma$ given by the following formula:
$$\Sigma_0=\{\sigma\in \Sigma: \sigma_j\sigma_{j+1}=0,\quad \forall j\in {\mathbb N}\cup\{0\}\}.$$
In other words, the set $\Sigma_0$ consists of segments without neighboring symbols '1'.

We list some evident properties of $\Sigma_0$.
\begin{enumerate}
\item $\Sigma_0$ is an infinite closed subset of $\Sigma$ and, hence, compact. This is because the set of all sequences of $\Sigma$ with '1' at positions $j$ and $j+1$ is open for any $j$;
\item $\Sigma_0$ is invariant with respect to the standard shift mapping $S$ on $\Sigma$ (erasing the first symbol of a sequence).
\item The periodic points of the map $S$ are dense in $\Sigma_0$. This is because the periodic points may be obtained by an infinite repetition of finite sequences of digits (if this does not result in entries '11'), e.g. $010\,010 \ldots$ or $10100\,10100 \ldots$.
\item For any $m\in {\mathbb N}$ the set $\Sigma_0$ contains at least one point of period $m$ (for instance, the point corresponding to the repeating sequence $10\ldots 0$ for $m>1$ and the sequence of zeros if $m=1$).
\item The set $\Sigma_0$ is transitive with respect to $S$.
The point with a dense orbit may be obtained by writing down all the finite admissible sequences of '0' and '1', separated by '0':
$$\sigma^*= 0\, 0\, 1\, 0\, 00\, 0\, 01\, 0\, 10\, 0\, 000\, 0\, 001\,0 \, 010\, 0\, 100\, 0\, 101\, 0 \ldots$$
\end{enumerate}

\begin{theorem}\label{Theorem1} For any $b\in (0,1)$, $b\neq 1/2$, there is $a_0=a_0(b)>0$ such that for any $a>a_0$ there exists the compact set $Q\subset [f_{min},f_{max}]$, invariant with respect to $f_{a,b}$ and such that
\begin{enumerate}
\item $|(f^2_{a,b})'(x)|>1$ for any $x\in Q$;
\item for any $m\ge 2$, the mapping $f^m_{a,b}|_Q$ is topologically conjugated to the Bernoulli shift on the set $\Sigma_0$. 
\end{enumerate}
\end{theorem}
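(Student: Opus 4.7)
The plan is to construct a two-element Markov-type partition $\{I_0,I_1\}$ of a suitable subset of $[f_{min},f_{max}]$ whose transition graph is $0\to 0,1$ and $1\to 0$ (so that the induced symbolic dynamics is exactly $\Sigma_0$), and then to code the invariant set $Q$ by itineraries. By Lemma \ref{lemma1} I may assume $b<1/2$, so for $a$ large the critical points satisfy $x_{max}\sim 1/a$, $x_{min}\sim 1-1/a$, while $f_{max}\sim 1-e^{-ab}$ and $f_{min}\sim e^{a(b-1)}$; in particular the decreasing branch $J:=[x_{max},x_{min}]$ lies in $[f_{min},f_{max}]$, and $f=f_{a,b}$ is a diffeomorphism from $J$ onto $[f_{min},f_{max}]$.

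Concretely I take $I_1=[\alpha_1,x_{max}]\subset[0,x_{max}]$ abutting the critical point $x_{max}$ on the left, and $I_0=[\alpha_0,\beta_0]\subset(x_{max},x_{min})$ in the decreasing branch. Using the left-branch asymptotic $f(x)\approx xe^{ab}$ for small $x$, the endpoint value $f(\alpha_1)$ can be made larger than $x_{max}$ by choosing $\alpha_1$ slightly above $x_{max}e^{-ab}$, so that $f(I_1)=[f(\alpha_1),f_{max}]$ is disjoint from $I_1$. Setting $\alpha_0$ just above $f(\alpha_1)$ then gives $f(I_1)\supset I_0$. On the other side, choosing $\alpha_0$ close to $x_{max}$ and $\beta_0$ close to $x_{min}$ makes $f(I_0)=[f(\beta_0),f(\alpha_0)]$ a decreasing image covering essentially all of $[\alpha_1,\beta_0]$, in particular covering $I_0\cup I_1$. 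These are the Markov inclusions
\begin{align*}
f(I_0)\supset I_0\cup I_1,\qquad f(I_1)\supset I_0,\qquad f(I_1)\cap I_1=\emptyset,
\end{align*}
i.e.\ precisely the golden-mean transition rule.

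Once the partition is fixed, define $Q:=\bigcap_{n\ge 0}f^{-n}(I_0\cup I_1)$ and the itinerary map $h\colon Q\to\Sigma$ by $h(x)_n=i\iff f^n(x)\in I_i$. The Markov conditions force $h(Q)\subset\Sigma_0$ and the commutation $h\circ f=S\circ h$. For each admissible finite word $\sigma_0\cdots\sigma_N\in\Sigma_0$, the cylinder $\bigcap_{k=0}^N f^{-k}(I_{\sigma_k})$ is a nonempty nested closed subinterval of $I_{\sigma_0}$, so $h$ is surjective onto $\Sigma_0$. Injectivity of $h$ and continuity of $h^{-1}$ reduce to showing these cylinder diameters tend to zero, for which the expansion asserted in claim (1) is the driving input. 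The resulting conjugacy of $f|_Q$ with $S|_{\Sigma_0}$ gives claim (2), and iterating produces the corresponding statement for $f^m|_Q$.

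The main obstacle is the expansion bound $|(f^2_{a,b})'(x)|>1$ on $Q$. Neither $I_0$ nor $I_1$ admits a uniform pointwise bound $|f'|>1$, because $x_{max}$ lies on the common boundary and $|f'(x_{max})|=0$, and because the same vanishing occurs at the other endpoint $x_{min}$ of $I_0$. The resolution is geometric: if $x\in I_0$ is close to $x_{max}$, or if $x\in I_1$ is close to $x_{max}$, then $f(x)\approx f_{max}$ lies above $\beta_0$ and so outside $I_0\cup I_1$; similarly $x\in I_0$ close to $x_{min}$ has $f(x)\approx f_{min}$ below $\alpha_1$, again outside $I_0\cup I_1$. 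Such points drop out of $Q$ at the first iterate. Quantifying this exclusion with the help of the negative-Schwarzian estimate of Lemma \ref{Lemma5}, which prohibits flat zones of $f'$ away from the critical points and in particular gives the usual Koebe-type control for cylinders, yields a uniform lower bound $|(f^2_{a,b})'|\ge\mu>1$ on $Q$ and closes both claims.
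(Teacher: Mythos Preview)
Your overall architecture---a two-interval Markov partition realising the golden-mean subshift $\Sigma_0$, with the coding map giving the conjugacy---is exactly the strategy the paper follows. The paper, however, first passes through the smooth conjugacy $y=h(x)=\ln\frac{1-x}{x}$ to the map $g_{a,b}(y)=y+\frac{a}{e^y+1}-ab$, and builds its two intervals $J_1\subset(y_{\max},y_{\min})$ and $J_2\subset(y_{\min},+\infty)$ there. Translated back to $x$-coordinates this is the same partition shape as yours (one interval on the decreasing branch, one on the short increasing branch), so the combinatorial half of your argument is fine and matches the paper.

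The genuine gap is your hyperbolicity step. You let $I_1$ and $I_0$ abut the critical points and then argue that a neighbourhood of each critical point is expelled from $Q$ at the first iterate; that observation is correct, but the sentence that follows---``Quantifying this exclusion with the help of the negative-Schwarzian estimate \ldots\ gives the usual Koebe-type control for cylinders, yields a uniform lower bound $|(f^2)'|\ge\mu>1$''---does not do the job. Koebe/negative-Schwarzian controls the \emph{distortion} $|(f^n)'(x)|/|(f^n)'(y)|$ on a diffeomorphic branch; it says nothing about the \emph{size} of $|(f^n)'|$. To get $|(f^2)'|>1$ you must actually bound $|f'|$ from below on the part of each interval that survives one iterate, and that requires you to locate the surviving subintervals precisely and compute derivatives there.

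This is exactly what the paper does, and it is the real content of the proof: it \emph{defines} the interval endpoints as preimages ($g(y_1^+)=y_{\max}$, $g(y_1^-)=y_2^+$, $g(y_2^\pm)=y_{\min},y_{\max}$), so that $J_1,J_2$ are already the first-return subintervals and are bounded away from the critical points by construction. It then computes asymptotically $g'(y_1^+)=1-ab(1-b)+o(a)$, $g'(y_1^-)=1-2ab(1-2b)+o(a)$ (both $\to-\infty$) and $g'(y_2^-)>1-a^2e^{-ab+3}$ (close to~$1$), and uses the sign of $g''$ (negative for $y<0$, positive for $y>0$) to see that these endpoint values are the minima of $|g'|$ over the intervals. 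The product estimate $\min_{J_1}|g'|\cdot\min_{J_2}g'>1$ together with the transition rule $J_2\to J_1$ then gives $|(g^2)'|>1$ on $K$. Your sketch omits all of this computation; to repair it you should either pass to the $g$-coordinate as the paper does (where the estimates are clean), or redo the same explicit endpoint calculations directly for $f$, fixing $\alpha_0,\beta_0,\alpha_1$ as preimages rather than as unspecified points ``close to'' the critical values.
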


The proof of Theorem \ref{Theorem1} is given in Section 5.

\section{Proof of Theorem \ref{Theorem1}}

\subsection{Transformation of coordinates}\label{subsec:conjugacy}
Introduce a family of maps $g_{a,b}:\mathbb{R}\to \mathbb{R}$ by 
\begin{equation}\label{equ:smoothconjugate}
g_{a,b}(y):= y + \dfrac{a}{e^y+1}-ab.
\end{equation}
with $a>0$ and $b\in(0,1)$. See Figure \ref{fig_2} for the graph of $g_{30,1/3}$. This family of maps $\{g_{a,b}\}$ has been introduced in \cite{EOS96}, 
and already been studied, see \cite{aks} and references therein. It was shown in \cite{Misiurewicz4} that for each $a,b$ by taking 
$$
y=h(x)=\ln\frac{1-x}{x}, \qquad 
\left( x=\dfrac1{e^y+1} \right)
$$
we have 
\begin{equation}\label{equ:smoothconjugate2}
g_{a,b}(y)=h\circ f_{a,b}\circ h^{-1}(y).
\end{equation}
In other words, the following diagram commutes
\[
\begin{CD}
(0,1) @>f_{a,b}(x)>> (0,1) \\
@VhVV @VhVV \\
\mathbb{R} @>g_{a,b}(y)>> \mathbb{R}
\end{CD}
\]
Therefore, $f_{a,b}$ and $g_{a,b}$ are smoothly conjugated. 

Consequently, the results of the papers \cite{aks} and \cite{Misiurewicz1} can be merged. For example, one can classify the absorbing intervals of the mapping $f_{a,b}$ as it was done in \cite{aks} for $g_{a,b}$. 

\subsection{Key lemma for $g_{a,b}$}\label{subsection:keylemma}
In the view of \eqref{equ:smoothconjugate2}, to prove Theorem \ref{Theorem1}, it suffices to prove the following Key lemma for the function $g_{a,b}$ given by \eqref{equ:smoothconjugate}. 

\begin{figure}[t!]\centering
  \includegraphics[width=.84\textwidth]{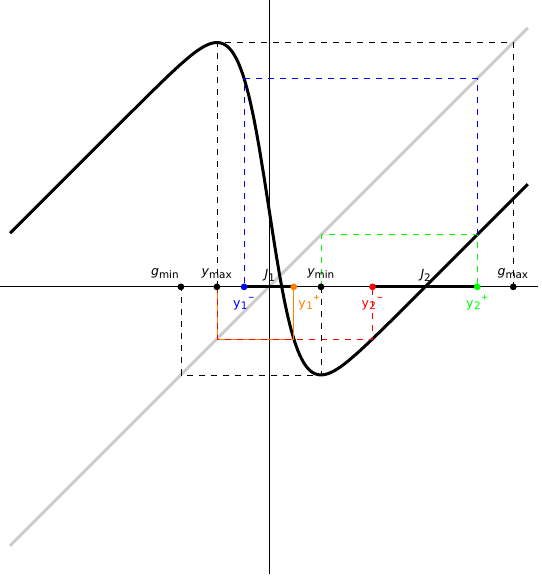}
\caption{\footnotesize The graph of the map $g_{30,1/3}$.}
\label{fig_2}
\end{figure}

\begin{lemma}[Key Lemma]\label{ta1} For any $b\in (0,1)$, $b\neq 1/2$, there is $a_0=a_0(b)>0$ such that for any $a\geq a_0$ there exists the compact set $K\subset {\mathbb R}$, invariant with respect to $g_{a,b}$ and such that
\begin{enumerate}
\item $|(g^2_{a,b})'(x)|>1$ for any $x\in K$;
\item the mapping $g_{a,b}|_K$ is topologically conjugated to the Bernoulli shift of the set $\Sigma_0$. 
\end{enumerate}
\end{lemma}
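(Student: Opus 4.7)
The plan is to construct two disjoint compact intervals $I_0, I_1 \subset \mathbb{R}$ that form a Markov system for $g_{a,b}$ with the golden-mean adjacency (all transitions allowed except $1 \to 1$), and then apply the standard one-dimensional symbolic-dynamics construction to obtain the conjugacy.

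First I would reduce to $b \in (0, 1/2)$: a direct computation using $1/(e^{-y}+1) = 1 - 1/(e^y+1)$ shows $g_{a,1-b}(y) = -g_{a,b}(-y)$, so the involution $y \mapsto -y$ conjugates the two maps (this is the $g$-side counterpart of Lemma~\ref{lemma1}). Under this reduction I would catalogue the asymptotics as $a \to \infty$ of the key structural points of $g_{a,b}$: the interior fixed point $y^* = \ln((1-b)/b)$ of order $1$; using $g'(y) = 1 - a/(4\cosh^2(y/2))$, the critical points $y_M \sim -\ln a$ (local max) and $y_m \sim \ln a$ (local min); the derivative $g'(y^*) = 1 - ab(1-b) =: -\lambda$ with $\lambda \to \infty$; the critical values $c_M = g(y_M) \sim a(1-b)$ and $c_m = g(y_m) \sim -ab$; and the rightmost preimage $y^{(2)}$ of $y^*$, the unique solution of $g(y) = y^*$ in $(y_m, \infty)$, satisfying $y^{(2)} = y^* + ab + o(1)$ from the asymptotic $g(y) \approx y - ab$ for $y \gg \ln a$.

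I would then fix constants $r, s, r_1, s_1$ of order $1$ (with $r$ chosen suitably large, e.g.\ $r \geq b+1$) and set
\[
I_0 := [y^* - r,\, y^* + s], \qquad I_1 := [y^{(2)} - r_1,\, y^{(2)} + s_1].
\]
For $a$ large enough $I_0$ lies in the decreasing branch $(y_M, y_m)$ and $I_1$ in the increasing branch $(y_m, \infty)$, so $g_{a,b}$ is monotone on each. The four Markov conditions to check are: (i) $g(I_0) \supset I_0$, since $|g'| \sim a$ uniformly on the compact $I_0$ yields $g(y^*-r) - y^* = \Theta(ra)$ and $g(y^*+s) - y^* = -\Theta(sa)$, both dominating $r,s$; (ii) $g(I_0) \supset I_1$, since $g(y^*-r) - y^* = \Theta(ra) \gg ab \sim y^{(2)} - y^*$ once $r > b$, so the left end of $I_0$ is thrown past $y^{(2)}$; (iii) $g(I_1) \supset I_0$, because $g$ on $I_1$ is an almost-isometry ($g'(y) = 1 - O(ae^{-ab})$ there) whose image is essentially a translate of $I_1$ ending at $g(y^{(2)}) = y^*$; (iv) $g(I_1) \cap I_1 = \emptyset$, since $g(I_1)$ sits near $y^*$ while $I_1$ sits at distance $\sim ab$ away.

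Finally I would define $K := \bigcap_{n\geq 0} g_{a,b}^{-n}(I_0 \cup I_1)$ and invoke the classical Cantor-set / itinerary construction (in the spirit of \cite[Ch.~11]{Devaney}) to identify $g_{a,b}|_K$ with the shift on $\Sigma_0$. The expansion claim $|(g_{a,b}^2)'(x)| > 1$ on $K$ follows because any admissible word of length two must contain a $0$ (the transition $11$ being forbidden), so $(g^2)'$ collects a factor of order $\lambda$ from the visit to $I_0$ while losing at most $1 + o(1)$ on $I_1$; the same uniform expansion yields injectivity of the itinerary map and the shrinking of cylinder intervals needed for continuity of the inverse. The main obstacle, I expect, will be the uniform-in-$a$ verification of (i) and (ii) beyond the small linear regime at $y^*$: one has to leverage the global monotone behaviour of $g$ on $(y_M, y^*)$ and the critical-value bound $c_M \sim a(1-b)$ to show the image really reaches past $y^{(2)}$. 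The monotonicity of $|g'|$ away from the critical points (visible directly from the $\cosh$-form of $g'$ and consistent with the negative Schwarzian of Lemma~\ref{Lemma5}) will be the tool making these estimates go through.
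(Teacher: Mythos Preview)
Your strategy coincides with the paper's: reduce to $b<1/2$ by the symmetry $g_{a,1-b}(y)=-g_{a,b}(-y)$, place one interval on the decreasing branch and one on the right increasing branch, verify the golden-mean covering relations $I_0\mapsto I_0\cup I_1$, $I_1\mapsto I_0$, $g(I_1)\cap I_1=\emptyset$, define $K=\bigcap_n g^{-n}(I_0\cup I_1)$, and use expansion of $g^2$ for the conjugacy. The execution differs only in how the two intervals are carved out. The paper takes \emph{exact} preimage intervals $J_1=[y_1^-,y_1^+]\subset(y_{\max},y_{\min})$ and $J_2=[y_2^-,y_2^+]\subset(y_{\min},\infty)$ with $g(J_1)=[y_{\max},y_2^+]$ and $g(J_2)=[y_{\max},y_{\min}]$, so the Markov inclusions hold by construction (at the cost of the preparatory Lemma~\ref{la1}); hyperbolicity is then obtained from endpoint derivative estimates (Lemma~\ref{la2}) propagated across $J_i$ via the sign of $g''$. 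Your fixed-width neighborhoods trade that bookkeeping for direct asymptotic checks of the four covering conditions, and your hyperbolicity argument is a bit cleaner: $|g'|\ge c\,a$ uniformly on the fixed compact $I_0$ while $g'=1-O(ae^{-ab})$ on $I_1$.

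One quantitative point to tighten: the threshold ``$r>b$'' (or ``$r\ge b+1$'') for condition~(ii) is not correct. Writing $\beta_r:=1/(e^{y^*-r}+1)$ one has exactly $g(y^*-r)-y^*=-r+a(\beta_r-b)$, while $y^{(2)}-y^*=ab+o(1)$; so $g(y^*-r)>y^{(2)}+s_1$ for large $a$ requires $\beta_r>2b$, i.e.\ $r>\ln\!\bigl(2(1-b)/(1-2b)\bigr)$, which blows up as $b\uparrow 1/2$. This is consistent with your closing caveat about going ``beyond the small linear regime at $y^*$'', and once $r$ is chosen this way (and then $r_1>r$, $s_1>s$ for condition~(iii)) the rest of your sketch goes through.
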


\begin{proof}
We assume, without loss of generality, that $b<1/2$, we always suppose this value to be fixed. 

The map $g_{a,b}$ has the unique fixed point $y_0=\ln((1-b)/b)$.

The derivative of $g_{a,b}$ is given by the formula 
$$g'_{a,b}(y)=1-\dfrac{ae^y}{(e^y+1)^2}.$$
The critical points are 
$$y_{max}=\ln \left(\dfrac{a}2-1-\sqrt{\dfrac{a^2}4-a}\right)$$
and
$$y_{min}=\ln \left(\dfrac{a}2-1+\sqrt{\dfrac{a^2}4-a}\right).$$
Observe that $y_{min}+y_{max}=0$ and 
$$e^{y_{min}}=\dfrac{a}2-1+\sqrt{\dfrac{a^2}4-a}=a+o(a)$$
as $a\to+\infty$. 
The function $g_{a,b}$ increases on $(-\infty,y_{max}]$ and $[y_{min},+\infty)$ and decreases on $[y_{max},y_{min}]$.

In our assumptions, the following inequalities hold:
$$y_{max}<0<y_0<y_{min}.$$

The second derivative of $g_{a,b}$ may be calculated by the formula:
\begin{equation}\label{equ:secondderivative}
g''_{a,b}(y)=\dfrac{ae^y(e^y-1)}{(e^y+1)^3}.
\end{equation}
Evidently, this function is positive for positive values of $y$ and vice-versa. The shape of $g_{a,b}$ is as illustrated in Figure \ref{fig_2}.

Denote 
$$
g_{min}:=g_{a,b}(y_{min})\qquad g_{max}:=g_{a,b}(y_{max}),
$$
then we have 
\begin{lemma}\label{la1} The value $a_0$ can be selected sufficiently large, so that 
\begin{equation}\label{eqa1}
g_{\min}<y_{\max}, \qquad g_{\max}>y_{\min},\qquad g_{a,b}(g_{\max})>y_{\min}.
\end{equation}
for any $a\ge a_0$.
\end{lemma}

\begin{proof}[Proof of Lemma \ref{la1}] Let us verify Lemma \ref{la1} item by item. The first inequality of \eqref{eqa1} can be rewritten as follows
$$
\ln\left(\dfrac{a}2-1+\sqrt{\dfrac{a^2}4-a}\right)+
\dfrac{a}{\frac{a}2+\sqrt{\frac{a^2}4-a}}-ab<\ln\left(\dfrac{a}2-1-\sqrt{\dfrac{a^2}4-a}\right)
$$
or
\begin{equation}\label{eqa11}
2\ln\left(\dfrac{a}2-1+\sqrt{\dfrac{a^2}4-a}\right)+
\dfrac{a}{\frac{a}2+\sqrt{\frac{a^2}4-a}}-ab<0
\end{equation}
Here we took into account the fact that $y_{min}+y_{max}=0$. The first term of the left-hand side \eqref{eqa11} equals $2\ln a + o(1)$, the second one is $1+o(1)$, and the third one is $-ab$. All the expression is negative for sufficiently big $a$ provided $b$ is positive.

The second inequality of \eqref{eqa1} looks as follows:
$$\ln\left(\dfrac{a}2-1-\sqrt{\dfrac{a^2}4-a}\right)+
\dfrac{a}{\frac{a}2-\sqrt{\frac{a^2}4-a}}-ab>\ln\left(\dfrac{a}2-1+\sqrt{\dfrac{a^2}4-a}\right)
$$
\begin{equation}\label{eqa>0}
\dfrac{a}{\frac{a}2-\sqrt{\frac{a^2}4-a}}-ab>2\ln\left(\dfrac{a}2-1+\sqrt{\dfrac{a^2}4-a}\right)
\end{equation}
that is $a-ab+o(a)>2\ln (a+o(a))$. Here we took into account the fact that
$$\dfrac{a}{\frac{a}2-\sqrt{\frac{a^2}4-a}}=\frac{a}2+\sqrt{\frac{a^2}4-a}=a+o(a).$$
The inequality \eqref{eqa>0} is obviously true for big values of $a$ since $b<1$.

Finally, the last inequalities of \eqref{eqa1} can be expressed in the form
$$g_{max}+\dfrac{a}{e^{g_{max}}+1}-ab>y_{min}$$
or
$$y_{max}+\dfrac{a}{e^{y_{max}}+1}+\dfrac{a}{e^{g_{max}}+1}-2ab>y_{min}$$
or
$$2y_{max}+\dfrac{a}{e^{y_{max}}+1}+\dfrac{a}{e^{g_{max}}+1}-2ab>0$$
$$\begin{array}{c}
2 \ln \left({\dfrac{a}2-1-\sqrt{\dfrac{a^2}4-a}}\right)+\dfrac{a}{\dfrac{a}2-\sqrt{\dfrac{a^2}4-a}}+\\
\dfrac{a}{1+\left(\dfrac{a}{2}-1-\sqrt{\dfrac{a^2}{4}-a}\right) \exp\left(\dfrac{a}{a/2-\sqrt{a^2/4-a}}-ab\right)}-2ab>0.
\end{array}$$
The first summand on the last formula is $o(a)$, the second is $a+o(a)$, the third one tends to 0 and the last one is $-2ab$. Therefore, the overall sum is $a(1-2b)+o(a)$ which must be positive. We complete the proof of Lemma \ref{eqa1}.
\end{proof}

\medskip

As it follows from the statement of the Lemma, $g_{min}<y_{max}<y_{min}$ and the map $g_{a,b}$  increases on $(y_{min},+\infty)$.
Consequently (as well as plotted in Figure \ref{fig_2}), there is a unique point $y_2^+$ in $[y_{min},+\infty)$ such that $g_{a,b}(y_2^+)=y_{min}$. 

Now, we prove that $y_2^+\in g_{a,b}[y_{max},y_{min}]$. Evidently, $g_{min}=g_{a,b}(y_{min})<x_{min}<y_2^+$. 

Now let us prove that $y_2\le g_{max}$. We take into account the inequality $g_{max}>y_{min}$ and the fact that $g_{a,b}$ increases for $y>y_{min}$. So, if there were $g_{max} < y_2^+$, we would have $g(g_{max})< g(y_2^+)=y_{min}$, which contradicts the last of inequalities \eqref{eqa1}.

Then there exists a unique point $y_1^-\in [y_{max},y_{min}]$ such that $g_{a,b}(y_1^-)=y_2^+$.

Besides, $y_{max}\in [g_{min},g_{max}]$ for big values of $a$. So, there are unique points $y_1^+\in [y_{max},y_{min}]$ and $y_2^-\in [y_{min},+\infty)$ such that 
$$g_{a,b}(y_1^+)=g_{a,b}(y_2^-)=y_{max}.$$

Now, we consider two segments: 
$$
J_1:=[y_1^-,y_1^+]\subset (y_{max},y_{min})~~~ \mbox{and}~~~~
J_2:=[y_2^-,y_2^+]\subset (y_{min},+\infty).
$$

These two segments are disjoint. Moreover, we have 
$$
g_{a,b}(J_1)=[y_{\max},y_2^+]\supset J_1\cup J_2, ~~~  g_{a,b}(J_2)=[y_{\max},y_{min}]\supset J_1, ~~~ \mbox{and}~~~
g_{a,b}(J_2)\cap J_2=\emptyset$$
These facts immediately imply that 
$$g_{a,b}^2(J_i)\supset J_1\cup J_2, \qquad i=1,2.$$
Applying $g_{a,b}$ to the latter inclusion, we can easily see that 
$$g_{a,b}^m(J_i)\supset J_1\cup J_2, \qquad i=1,2$$
for any $m\ge 2$. 
We set 
\begin{equation}\label{equ:horseshoe}
K:=\bigcap_{k=0}^{\infty} g_{a,b}^{-k} (J_1\cup J_2).
\end{equation}

This $K$ in \eqref{equ:horseshoe} is a nonempty compact invariant set. Given a point $y\in K$, we construct a sequence $\eta(y)=(\sigma_j: j\in {\mathbb N}\cup\{0\})\in \Sigma$ as follows:
$\sigma_m=\epsilon\in \{0,1\}$ if $g^m_{a,b}(y)\in J_{\epsilon+1}$.

Since $g_{a,b}(J_2)\cap J_2 =\emptyset$, we have  
$\eta(y)\in \Sigma_0$ for any $y\in K$.

Now we prove that for any point of $\Sigma_0$ there exists a corresponding point of $K$.

\begin{lemma}\label{la2} Given a sequence $\Bar{\sigma}\in \Sigma_0$, there exists a point $\Bar{y}\in \Sigma_0$ such that
$$\eta(\Bar{y})=\Bar{\sigma}.$$
\end{lemma}

\begin{proof} Let $\Bar{\sigma}=(\sigma_j:j\in {\mathbb N}\cup\{0\})$. We shall construct a nested sequence of segments 
$$I_0\supset I_1 \supset I_2\supset \ldots$$
First of all, we take $I_0=J_{\sigma_0+1}$. 

Let the segment $I_m$ be already constructed so that 
$g_{a,b}^m(I_m)=J_{\sigma_{m}+1}$. Since $\Bar{\sigma}\in \Sigma_0$, we have $g_{a,b}(J_{\sigma_m+1})\supset J_{\sigma_{m+1}+1}$. So, we can take a subsegment $I_{m+1}\subset I_m$ so that $g_{a,b}^m(I_{m+1})=J_{\sigma_{m+1}+1}$. 

We repeat this procedure infinitely many times. So, we can take any point ${\Bar{y}}\in \bigcap_{m=0}^\infty I_m$.
\end{proof}

To prove that the mapping $\eta$ is one-to-one, we demonstrate that the mapping $g_{a,b}$ is uniformly expanding on each of the sequences $J_i$.
So, it suffices to estimate the derivatives of $g_{a,b}^2$.

\begin{lemma}\label{la2} The value $a_0$ can be taken so big that
\begin{equation}\label{equ:hyperbolicity}
\min\left\{|g'_{a,b}(y_1^-)|,|g'_{a,b}(y_1^+)|\right\}\cdot g'_{a,b}(y_2^-)>1
\end{equation}
\end{lemma}

\begin{proof} 
Firstly, we give an estimate for $g'_{a,b}(y_2^-)$.
We have 
\begin{equation}\label{equ:gab}
g_{a,b}(y_2^-)=y_2^-+\dfrac{a}{e^{y_2^-}+1}-ab=y_{max}.
\end{equation}
On the other hand, $y_2^->y_{min}$, hence
$$0<\dfrac{a}{e^{y_2^-}+1}<\dfrac{a}{e^{y_{min}}+1}<2$$
for sufficiently big $a$.

Therefore, using \eqref{equ:gab} and the last inequality, we get $e^{y_2^-}>e^{y_{max}} e^{ab-2}$, 
$$\dfrac{ae^{y_2^-}}{(e^{y_2^-}+1)^2}<ae^{-y_2^-}<ae^{-y_{max}} e^{2-ab}=ae^{y_{min}} e^{2-ab}
<a^2e^{-ab+3}$$
for big values of $a$, and 
\begin{equation}\label{eqest1}
 g'_{a,b}(y_2^-)>1-a^2 e^{-ab+3}   
\end{equation}
for big values of $a$.

Secondly, we have $g_{a,b}(y_1^+)=y_{max}$ that is
$$y_1^++\dfrac{a}{e^{y_1^+}+1}-ab=y_{max}$$
which means that 
$$\dfrac{a}{e^{y_1^+}+1}=y_{max}-y_1^++ab$$
and, since $y_1^+\in (y_{max},y_{min})$, we have 
$$\dfrac{a}{e^{y_1^+}+1}=ab+o(a)$$
and, consequently
$$\dfrac{1}{e^{y_1^+}+1}=b+o(1), \qquad e^{y_1^+}=\dfrac1{b}-1+o(1).$$

Therefore,
\begin{equation}\label{eqest2}
 g'_{a,b}(y_1^+)=1-ab(1-b)+o(a)   
\end{equation}
which tends to $-\infty$ as $a\to \infty$.

Finally, we have $g_{a,b}(y_1^-)=y_2^+$ and $g_{a,b}(y_2^+)=y_{min}$. The last inequality together with the definition of $g_{a,b}$ and the fact that $y_2^+>y_{min}$ imply that $y_2^+=ab+o(a)$. This means that 
$$y_1^-+\dfrac{a}{e^{y_1^-}+1}-ab =ab+o(a).$$
Since $y_1^-\in (y_{max},y_{min})$, we have
$$\dfrac{a}{e^{y_1^-}+1}=2ab+o(a),$$
$$\dfrac{1}{e^{y_1^-}+1}=2b+o(1), \qquad e^{y_1^-}=\dfrac1{2b}-1+o(1).$$

This means that 
\begin{equation}\label{eqest3}
 g'_{a,b}(y_1^{-})=1-2ab(1-2b)+o(a)   
\end{equation}
which also tends to $-\infty$ for big values of $a$.

Therefore, \eqref{equ:hyperbolicity} directly follows from inequalities \eqref{eqest1}, \eqref{eqest2}, and \eqref{eqest3}. Thus, we complete the proof of Lemma \ref{la2}.
\end{proof}

Lemma \ref{la2}, together with the properties of the second derivative of $g_{a,b}$ (see \eqref{equ:secondderivative}) and the fact that for any $x\in K$
$$ (x\in J_2)\implies (g_{a,b}(x) \in J_1)$$
implies that 
\begin{equation}\label{equ:hyperbolicity2}
(g^2_{a,b})'(x)>1 \qquad \forall x\in K.
\end{equation}
Thus the conjugacy between $g_{a,b}^m|_K$ and the Bernoulli shift on $\Sigma_0$ follows from \eqref{equ:hyperbolicity2} and the disjointedness of $J_1$ and $J_{2}$. Thus, Key Lemma \ref{ta1} is proven.
\end{proof}

\begin{rem}
The theorem implies, in particular, the existence of a periodic hyperbolic orbit of any given period for big values of $a$. The existence without hyperbolicity follows from the results of \cite{Misiurewicz1}. However, the techniques of the proof of that paper, based on the result of \cite{Misiurewicz2}, give no chance to find out if the obtained periodic points (and the chaotic invariant set) are hyperbolic. \end{rem}

\section{One-dimensional maps with fixed mean values of periodic orbits}

Define a specific class of one-dimensional systems.

\begin{definition}\label{dm}
Let $I$ be a convex subset of $\mathbb R$. We say that a measurable mapping $f:I \mapsto I$ is of the class \emph{$\mathcal M$} if there exists a constant $b\in {\mathbb R}$ such that for any Borel probability $f$-invariant measure $\mu$ we have 
\begin{equation}\label{eqb}
\int_{I} x \, d\mu =b.
\end{equation}
In particular, for any $n$-periodic point $x_0$ (i.e. $f^n(x_0)=x_0$), we have
\begin{equation}\label{eqb1}
\dfrac1{n} \sum_{j=0}^{n-1} f^j(x_0) =b.
\end{equation}
If we need to specify the constant $b$ in conditions \eqref{eqb} and \eqref{eqb1}, we say that the mapping $f\in {\mathcal M}_b$.
\end{definition}

In \cite[Corollary 3.5]{Misiurewicz1} it was 
proven that $f_{a,b}\in {\mathcal M}_b$ 
where $f_{a,b}$ is the replicator map considered in previous sections. 
This solves a known question from \cite{Misiurewicz3}: whether there are any smooth maps of $\mathcal M$, with many periodic points other than 
\begin{equation}\label{aeminusx}
x\mapsto Axe^{-x}.
\end{equation}
Besides, an open question was formulated: to find all nontrivial smooth (analytic) maps having infinitely many periodic orbits for which centers of mass of all periodic orbits coincide.

We are still quite far from being sure to construct all the possible maps of $\mathcal M$. However, in this section, we describe a very broad class of such mappings.

Let us start with a very trivial observation. 

\begin{prop} Let $f:I \to I$ be a measurable mapping of a convex subset of $\mathbb R$. Then, for any probability $f$-invariant measure $\mu$ and any function $\varphi\in {\mathbb L^1}(\mu)$ we have $$\int_I (\varphi(f(x))-\varphi(x))\, d\mu=0$$  
\end{prop}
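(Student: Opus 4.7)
The plan is to derive the identity directly from the definition of an $f$-invariant measure. Recall that $\mu$ is $f$-invariant means $\mu(f^{-1}(A))=\mu(A)$ for every measurable $A\subset I$, or equivalently $f_{*}\mu=\mu$. From this, the claim reduces to the standard change-of-variables identity
$$\int_I \varphi(f(x))\,d\mu(x)=\int_I \varphi(y)\,d\mu(y), \qquad \varphi\in L^1(\mu),$$
after which subtracting $\int_I \varphi\,d\mu$ from both sides gives the proposition.

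Verifying this identity is routine and proceeds in the standard three steps. First, for an indicator $\varphi=\chi_A$, the left-hand side equals $\mu(f^{-1}(A))$ while the right-hand side equals $\mu(A)$, so the identity is precisely the $f$-invariance of $\mu$. Second, by linearity of the integral, the identity extends to all nonnegative measurable simple functions, and then to every nonnegative measurable $\varphi$ by monotone convergence. Third, for general $\varphi\in L^1(\mu)$, decompose $\varphi=\varphi^+-\varphi^-$; applying the nonnegative case to $|\varphi|$ shows that $\varphi\circ f\in L^1(\mu)$, and linearity finishes the proof.

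There is no genuine obstacle here; this is simply the statement that integration against an invariant measure is unchanged by precomposition with the map, and the author correctly flags it as a trivial observation. Its role is preparatory: applied with a suitable $\varphi$ (for example $\varphi(x)=x$ when the identity function lies in $L^1(\mu)$, as it does on a bounded interval $I$) it produces linear relations of the form $\int_I f\,d\mu=\int_I x\,d\mu$, which are exactly the type of constraints needed to analyze maps in the class $\mathcal M$ introduced in Definition \ref{dm}.
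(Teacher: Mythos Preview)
Your proof is correct. The paper does not supply a proof at all---it simply labels the proposition ``a very trivial observation''---so your standard change-of-variables argument (indicators $\to$ simple functions $\to$ nonnegative via monotone convergence $\to$ general $L^1$ via $\varphi=\varphi^+-\varphi^-$) is exactly the expected justification and there is nothing to compare.

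One small remark on your closing paragraph: in the paper the proposition is applied not with $\varphi(x)=x$ but with $\varphi=H$, where $H$ satisfies $H(f(x))-H(x)=x-b$; integrating then gives $\int_I(x-b)\,d\mu=0$, i.e.\ $\int_I x\,d\mu=b$, which is how membership in $\mathcal M_b$ is obtained.
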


So, we get the following statement.

\begin{theorem}
Let $f:{\mathbb R} \to {\mathbb R}$ be a continuous mapping of a convex subset of a line. If there is a piecewise continuous function $H: I \mapsto {\mathbb R}$ and a constant $b\in {\mathbb R}$ such that 
\begin{equation}\label{eqh}
H(f(x))-H(x)=x-b
\end{equation}
then $f\in {\mathcal M}_b$.
\end{theorem}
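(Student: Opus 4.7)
The plan is to apply the preceding proposition to the test function $\varphi = H$. For any $f$-invariant Borel probability measure $\mu$ on $I$ with $H\in L^1(\mu)$, the proposition gives
$$0 \;=\; \int_I \bigl(H(f(x))-H(x)\bigr)\,d\mu \;=\; \int_I (x-b)\,d\mu \;=\; \int_I x\,d\mu - b,$$
where the second equality uses the hypothesis \eqref{eqh} and the third uses that $\mu$ is a probability measure. Rearranging yields $\int_I x\,d\mu = b$, which is exactly the condition $f\in \mathcal{M}_b$. In essence, $H$ plays the role of a coboundary witnessing that the observable $x-b$ averages to zero against every invariant measure, so the argument is a one-line cohomological identity.

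The only hypothesis that demands care is $H\in L^1(\mu)$. When $I$ is compact—as for the replicator map $f_{a,b}$ on $[0,1]$—a piecewise continuous function on $I$ is bounded, hence automatically in $L^1(\mu)$. For unbounded $I$, one can restrict attention to invariant measures of compact support (which contains the dynamically nontrivial ones), or truncate $H$ to $H_N := (H\wedge N)\vee(-N)$, apply the proposition to each $H_N$, and pass to the limit by dominated convergence under a mild growth assumption relating $H$ and $|x|$.

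For periodic orbits, the integrability question disappears entirely. If $f^n(x_0)=x_0$, then summing \eqref{eqh} along the orbit produces a telescoping identity
$$0 \;=\; H(f^n(x_0))-H(x_0) \;=\; \sum_{j=0}^{n-1}\bigl(H(f^{j+1}(x_0))-H(f^j(x_0))\bigr) \;=\; \sum_{j=0}^{n-1}\bigl(f^j(x_0)-b\bigr),$$
which immediately rearranges to $\tfrac{1}{n}\sum_{j=0}^{n-1} f^j(x_0)=b$, matching \eqref{eqb1}. The main (only) obstacle is thus verifying $H\in L^1(\mu)$ for general noncompactly supported invariant measures; this is automatic in every setting relevant to the paper, and the proof otherwise reduces to substituting $\varphi = H$ into the preceding proposition.
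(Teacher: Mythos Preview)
Your proposal is correct and follows exactly the paper's own approach: the theorem is stated immediately after the proposition with the words ``So, we get the following statement,'' and the intended proof is precisely to substitute $\varphi = H$ into the proposition. Your additional discussion of the $L^1$ hypothesis and the telescoping argument for periodic orbits is more careful than the paper itself, which leaves these points implicit.
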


\begin{corollary}
    Let $g(y)=y+\varphi(y)$ where $\varphi$ is a continuous strictly decreasing map. Then the transformation $x=\varphi(y)$ reduces the map $g$ to that of the class ${\mathcal M}_0$. Here we have $\varphi=H^{-1}$.
\end{corollary}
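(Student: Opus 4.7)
The plan is to apply the preceding Theorem with constant $b=0$ and with $H$ taken to be the inverse of $\varphi$, after first conjugating $g$ by the change of variable $x = \varphi(y)$.

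First, I would set $\tilde g := \varphi\circ g\circ \varphi^{-1}$, which is well-defined on $\varphi(I)$ (a convex subset of $\mathbb R$) because $\varphi$ is continuous and strictly decreasing and hence a homeomorphism onto its image. Using the hypothesis $g(y) = y + \varphi(y)$ and substituting $y = \varphi^{-1}(x)$, a one-line computation gives
\[
\tilde g(x) \;=\; \varphi\!\left(\varphi^{-1}(x) + \varphi(\varphi^{-1}(x))\right) \;=\; \varphi\!\left(\varphi^{-1}(x) + x\right).
\]
Since $f$-invariant measures transport bijectively under a homeomorphic conjugacy, it is enough to show $\tilde g \in \mathcal M_0$; membership of $g$ in the corresponding $\mathcal M_b$-class on the original coordinate follows automatically (with the obvious adjustment of constants).

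Second, I would verify the functional equation \eqref{eqh} of the previous Theorem with $H := \varphi^{-1}$ and $b := 0$. Applying $\varphi^{-1}$ to the formula above yields
\[
H(\tilde g(x)) - H(x) \;=\; \varphi^{-1}\!\left(\varphi(\varphi^{-1}(x) + x)\right) - \varphi^{-1}(x) \;=\; \bigl(\varphi^{-1}(x) + x\bigr) - \varphi^{-1}(x) \;=\; x.
\]
This is exactly the hypothesis \eqref{eqh} with $b=0$, so the Theorem applies and gives $\tilde g \in \mathcal M_0$.

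There is really no substantial obstacle here: the content of the corollary is precisely that the relation $g(y)-y = \varphi(y)$ becomes, after conjugation by $\varphi$, the telescoping identity $H(\tilde g(x)) - H(x) = x$ needed to invoke the Theorem. The only small points to check are that $\varphi^{-1}$ is continuous (immediate from $\varphi$ being a continuous strict monotone), that $\tilde g$ maps $\varphi(I)$ into itself (inherited from $g:I\to I$), and that Borel invariant measures correspond under the conjugacy (standard). The explicit form $\tilde g(x) = \varphi(\varphi^{-1}(x)+x)$ also explains the remark $\varphi = H^{-1}$ appended to the statement.
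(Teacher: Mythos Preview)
Your proof is correct and is exactly the argument the paper has in mind: the Corollary is stated without proof, and the intended justification is precisely to conjugate $g$ by $\varphi$ and observe that the resulting map $\tilde g(x)=\varphi(\varphi^{-1}(x)+x)$ satisfies the functional equation \eqref{eqh} with $H=\varphi^{-1}$ and $b=0$, so the preceding Theorem applies. One small remark: your aside about ``membership of $g$ in the corresponding $\mathcal M_b$-class on the original coordinate'' is not part of the Corollary's claim and is not needed---the statement only concerns the transformed map $\tilde g$---so you can simply drop that sentence.
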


Equation \eqref{eqh} implies that for any $n\in {\mathbb N}$ we have 
$$H(f^n(x))-H(x)=x+f(x)+\ldots+f^{n-1}(x)-nb.$$

The latter equation implies that the Birkhoff averages  
$$\lim_{n\to \infty} \dfrac{1}{n} (x+f(x)+\ldots+f^{n-1}(x))=b$$
for all $x$ such that $H(f^n(x))$ is bounded (or, at least, $H(f^n(x))=o(n)$).

\begin{rem}
    Observe that if the mapping $H$ is invertible, equation \eqref{eqh} implies that
    any map  
    \begin{equation}\label{eqgg}
    f(x)=H^{-1}(H(x)+x-b)\in {\mathcal M}_b.
    \end{equation}
    If 
    $$H(x)=\dfrac1{a}\left(\ln\dfrac{1-x}{x}\right)=\dfrac{h(x)}{a},$$ the formula \eqref{eqgg} gives us the mapping $f_{a,b}$ introduced above; for $H(x)=-\ln x$, we get the mapping \eqref{aeminusx} with $A=e^b$. However, equation \eqref{eqgg} can generate multiple examples of mappings from ${\mathcal M}_b$ (also having some other similar properties). For example, one can consider the family $$\arctan (\tan x - a(x-b))$$
    or take $H=\Phi^{-1}$ where $\Phi$ is the cumulative function of the normal distribution.
\end{rem}

\begin{rem} 
The fact that $f\in {\mathcal M}$ does not imply that it has infinitely many periodic points. We know this fact for the map $f_{a,b}$ introduced above. Meanwhile, Theorem \ref{Theorem1} guarantees that the map $f_{a,b}$ has a hyperbolic chaotic invariant set for some values $a$ and $b$. Suppose that it is so for $a=a_0$, $b=b_0$. Then we can take  $${\tilde f}_H=H^{-1}(H(x)+x-b_0)$$
for some $H$, $C^1$ close to $\frac1{a_0}(\ln(1-x)/x)$ in the absorbing segment of $f_{a_0,b_0}$ (actually, this is $[f_{max},f_{min}]$). We get a map of the class $\mathcal M$ that still has a chaotic invariant set and, consequently, infinitely many periodic points. 
\end{rem}

\begin{rem} Finally, the following open question appears: is there is any map of the class $\mathcal M$ that cannot be described by Equation \eqref{eqgg} on its nonwandering set?
\end{rem}

\section*{Acknowledgments}
S. K. would like to thank Peking University for their support and hospitality. Y. Z. would like to thank the Mathematical Institute of the Polish Academy of Sciences for their support and hospitality, and is partially supported by NSFC Nos. 1216114100212271432 and USTC-AUST Math Basic Discipline Research Center. The authors would also like to thank the VIII Symposium on Nonlinear Analysis in Toru\'{n} 2024, where some parts of this work were done.

\bigskip

\end{document}